\newtheorem*{theorem*}{Theorem}
\newtheorem{thm}{Theorem}
\newtheorem{pr}[thm]{Proposition}
\newtheorem{cor}[thm]{Corollary}
\newtheorem{lem}[thm]{Lemma}
\newtheorem{theorem}{Theorem}
\newtheorem{corollary}[theorem]{Corollary}
\theoremstyle{definition}
\newtheorem{rem}[thm]{\scshape{Remark}}
\newtheorem{example}[thm]{\scshape{Example}}
\def\r{\rho}
\def\a{\alpha}
\keywords{Group element orders, Frobenius groups, Sylow tower}
\subjclass[2010]{20D60, 20F16}
\begin{document}

\title{Upper bounds for the product of element orders of finite groups}

\author[E. Di Domenico]{Elena Di Domenico}
\address{Elena Di Domenico: Department of Mathematics, University of Trento, 38123, Povo (TN)  Italy; Matematika Saila, University of the Basque Country UPV/EHU, 48080 Bilbao Spain}
 \email{elena.didomenico@yahoo.it}
 
\author[C. Monetta]{Carmine Monetta}
\address{Carmine Monetta: Dipartimento di Matematica, Universit\`a di Salerno, Italy}
\email{cmonetta@unisa.it}

\author[M. Noce]{Marialaura Noce}
\address{Marialaura Noce: Dipartimento di Matematica, Universit\`a di Salerno, Italy }
\email{mnoce@unisa.it}

\thanks{The authors are partially supported by the ``National Group for Algebraic and Geometric Structures, and their Applications'' (GNSAGA - INdAM). The first and the last author are supported by the Spanish Government grant MTM2017-86802-P, partly with FEDER funds, and by the Basque Government grant IT974-16. The first author  acknowledges support from the Department of Mathematics of the University of Trento. The third author acknowledges support from  EPSRC, grant number
1652316.}

\maketitle

\begin{abstract}
Let $G$ be a finite group of order $n$, and denote by $\rho(G)$ the product of element orders of $G$. The aim of this work is to provide some upper bounds for $\rho(G)$ depending only on $n$ and on its least prime divisor, when $G$ belongs to some classes of non-cyclic groups.
\end{abstract}

\section{Introduction}
\noindent It is well-known that the behaviour of element orders strongly affects the structure of a periodic group. For instance, in the finite case, a group can be characterized by looking at its element orders, as the group \mbox{PSL}$(2,q)$ for $q \neq 9$, see \cite{BW}.
%It is well known that the study of structural information of a finite group by looking at its element orders is a classical topic in Group Theory.
Therefore, it is natural to study the set $\omega(G)=\{o(x) \ | \ x \in G \}$ of a given periodic group, where $o(x)$ denotes the order of $x \in G$. To give an example, if $\omega(G)\subseteq\{1,2,3,4\}$, the group $G$ is locally finite \cite{sanov}. For a survey on this topic see \cite{HLM1}.
%The research of structural information about a periodic group by looking at its element orders is by now a classical topic in Group Theory. For instance, given a periodic group $G$, one can study the set $\omega(G)=\{o(x) \ | \ x \in G \}$, where $o(x)$ denotes the order of $x \in G$: for a survey on this topic see for example \cite{HLM1}.
Furthermore, one can impose some arithmetic conditions on element orders of a subset $S$ of a group $G$ to obtain information about the subgroup generated by $S$, see for instance \cites{BM,BMS, BS, MT}. Moreover, some recent criteria for solvability, nilpotency and other properties of finite groups $G$, based either on the orders of the elements of $G$ or on the orders of the subgroups of $G$ have been described in \cite{HLM5}.

Another direction is to consider functions depending on element orders. In  \cite{AJI}, H. Amiri et al. introduced the function $\psi(G)$, which denotes the sum of element orders of a finite group $G$. They proved that if $G$ is a non-cyclic group of order $n$, then $\psi(G) < \psi(C_n)$, where $C_n$ denotes the cyclic group of order $n$ (see also \cite{AJ} and \cite{Jafarian}). Recently, this last result has been improved by M. Herzog et al., who proved in \cite{HLM} that if $G$ is a finite non-cyclic group then $\psi(G) \leq \frac{7}{11}\psi(C_n)$, by showing also that this bound is the best possible; see also \cites{HLM2, HLM3, HLM4}.

In the present work, we will denote by $\r(G)$ the product of element orders of a finite group $G$.  Some finite groups can be recognized by the product of its element orders, like the groups $PSL(2,7)$ and $PSL(2,11)$, while the groups $PSL(2,5)$ and $PSL(2,13)$ are uniquely determined by their orders and the product of element orders, see \cite{AK2}. However, one can easily see that in general the knowledge of $\r(G)$ is not sufficient to recognize the group $G$, even when its order is known. Indeed, if we denote by $S_4$ the symmetric group of degree $4$, and by $D_{12}$ the dihedral group of order $12$, then $S_4$ and $C_2 \times D_{12}$ have the same order and $\r(S_4)=\r(C_2 \times D_{12})$. This implies the necessity to found under which conditions $\rho(G)$ affects the structure of the group $G$. 

The function $\rho$ was studied  by M. Garonzi and M. Patassini in \cite{GP}, where they proved that $\r(G) \leq \r(C_n)$ for every finite group $G$ of order $n$, and that $\r(G) = \r(C_n)$ if and only if $G\simeq C_n$. Later M. T\u{a}rn\u{a}uceanu in \cite{tarnauceanu} studied $\rho(G)$ when $G$ is a finite abelian group $G$, showing that two finite abelian groups of the same order are isomorphic if and only if they have the same product of element orders. 

The aim of this paper is to provide more information about the function $\r$. We consider some classes of non-cyclic finite groups, and we determine upper bounds for the product of element orders depending only on the order of the group and on the smallest prime dividing it. 

Our first result reads as follows.

\begin{theorem}\label{mainthm}
Let $G$ be a non-cyclic supersoluble group of order $n$. If either $G$ is nilpotent or $G$ is not metacyclic, then

\[
%\r(G) \leq  \frac{1}{q^{\frac{n}{q} (q-1)}}\r(C_{n}).
\r(G) \leq q^{-\frac{n}{q} (q-1)}\r(C_{n}).
\]

where $q$ is the smallest prime dividing $n$.
\end{theorem}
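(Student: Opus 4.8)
The plan is to bypass the distinction between the two hypotheses and extract from each of them the only structural feature actually needed: the existence of a \emph{non-cyclic Sylow subgroup}. If $G$ is nilpotent and non-cyclic, it is the direct product of its Sylow subgroups, so at least one of them is non-cyclic. If $G$ is not metacyclic, then by the classical theorem that a finite group all of whose Sylow subgroups are cyclic is metacyclic (H\"older--Burnside--Zassenhaus), again some Sylow subgroup must be non-cyclic. So in both cases I may fix a prime $p$ whose Sylow $p$-subgroup $P$, of order $p^{a}$ with $a\ge 2$, is non-cyclic, and note $q\le p$.

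Next I would rewrite everything $p$-adically. For a finite group $H$ put $L_{\ell}(H)=\sum_{x\in H}v_{\ell}(o(x))$, where $v_{\ell}$ is the $\ell$-adic valuation; then $\rho(H)=\prod_{\ell}\ell^{\,L_{\ell}(H)}$, so comparing $G$ with $C_{n}$ reduces to comparing the exponents prime by prime. Writing $L_{\ell}(H)=\sum_{j\ge 1}\#\{x\in H:\ell^{j}\mid o(x)\}$, the basic step is the layerwise inequality $\#\{x\in G:\ell^{j}\mid o(x)\}\le\#\{x\in C_{n}:\ell^{j}\mid o(x)\}$ for all $\ell,j$. To prove it, write $n=\ell^{b}m$ with $\ell\nmid m$; for $1\le j\le b$ one has $\{x\in G:\ell^{j}\nmid o(x)\}=\{x\in G:x^{\ell^{j-1}m}=1\}$, and by Frobenius' theorem the number of solutions of $x^{\ell^{j-1}m}=1$ is a multiple of $\gcd(\ell^{j-1}m,n)=\ell^{j-1}m$, hence at least $\ell^{j-1}m=\#\{x\in C_{n}:\ell^{j}\nmid o(x)\}$; for $j>b$ both sides vanish. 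Summing over $j$ gives $L_{\ell}(G)\le L_{\ell}(C_{n})$ for every $\ell$, so every factor $\ell^{\,L_{\ell}(G)-L_{\ell}(C_{n})}$ of $\rho(G)/\rho(C_{n})$ is at most $1$.

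The heart of the argument is that the chosen prime $p$ contributes a \emph{quantified} deficit, coming entirely from the top layer $j=a$. In $C_{n}$ the number of elements with $p^{a}\mid o(x)$ equals $m(p^{a}-p^{a-1})=\tfrac{n}{p}(p-1)$, whereas in $G$ it is $0$: an element $x$ with $p^{a}\mid o(x)$ would make $\langle x\rangle$ contain a cyclic subgroup of order $p^{a}$, i.e. a cyclic Sylow $p$-subgroup, contradicting that $P$ and hence, by conjugacy, every Sylow $p$-subgroup is non-cyclic. Combined with the nonnegativity of all other layers this yields
\[
L_{p}(C_{n})-L_{p}(G)\ \ge\ \tfrac{n}{p}(p-1),\qquad\text{hence}\qquad \frac{\rho(G)}{\rho(C_{n})}\ \le\ p^{-\frac{n}{p}(p-1)}.
\]
I would then pass from $p$ to the least prime $q$ by checking that $f(x)=(1-1/x)\log x$ is increasing for $x\ge 2$, since $f'(x)=(x-1+\log x)/x^{2}>0$; as $p\ge q$ this gives $p^{-\frac{n}{p}(p-1)}\le q^{-\frac{n}{q}(q-1)}$, which is the asserted inequality.

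The routine parts are the Frobenius count and the monotonicity estimate. The two genuinely load-bearing points, and where I would be most careful, are (i) the reduction of \emph{both} hypotheses to the single statement ``$G$ has a non-cyclic Sylow subgroup'', which in the non-metacyclic case rests on the classification of groups with all Sylow subgroups cyclic; and (ii) the observation that for a non-cyclic Sylow prime the whole top valuation layer collapses, producing exactly the extremal deficit $\tfrac{n}{p}(p-1)$. This last point also pins down sharpness: the bound is attained by $C_{q}\times C_{q}$, for which all lower layers already match the cyclic count and only the top layer is lost.
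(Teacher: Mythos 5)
Your proof is correct, and it takes a genuinely different route from the paper's. The paper argues structurally and inductively: the nilpotent case is handled by induction on the number of prime divisors, and the supersoluble non-metacyclic case by induction on $n$, using supersolubility to write $G = P \rtimes H$ with $P$ the Sylow subgroup for the \emph{largest} prime, followed by a case analysis ($P$ cyclic or not, $H$ metacyclic or not, $C_H(P)=H$ or not) that leans on lemmas about $\rho$ of semidirect products with a normal Sylow subgroup and on the Garonzi--Patassini inequality $\rho(H) \leq \rho(C_{|H|})$. You instead collapse both hypotheses into the single fact that some Sylow $p$-subgroup is non-cyclic --- valid in the nilpotent case because a nilpotent group with all Sylow subgroups cyclic is cyclic, and in the non-metacyclic case by the H\"older--Burnside--Zassenhaus theorem; note the load-bearing detail here is that this classical theorem gives metacyclicity exactly in the paper's strong sense (both $G'$ and $G/G'$ cyclic, cf.\ Robinson 10.1.10), not merely a cyclic-by-cyclic extension, so your contrapositive is legitimate. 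From there your argument is self-contained counting: the layer inequality $\#\{x \in G : \ell^j \mid o(x)\} \leq \#\{x \in C_n : \ell^j \mid o(x)\}$ via Frobenius' theorem applied to $x^{\ell^{j-1}m}=1$, the collapse of the top $p$-layer (no element of $G$ has order divisible by $|P|$, since such an element would generate a cyclic Sylow $p$-subgroup, while $C_n$ has exactly $\tfrac{n}{p}(p-1)$ such elements), and the monotonicity of $(1-1/x)\log x$ --- which is precisely the paper's Remark~\ref{rem: q<p} --- to pass from $p$ to the least prime $q$. All of these steps check out. Your approach buys quite a lot: it avoids induction and case analysis entirely, it never uses supersolubility (so it actually proves the stronger statement that \emph{every} finite group possessing a non-cyclic Sylow subgroup satisfies the bound), it re-proves rather than quotes $\rho(G) \leq \rho(C_n)$, and it identifies extremal examples ($C_q \times C_q$, and also $Q_8$). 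What the paper's route buys in exchange is reusable machinery: its lemmas on $P \rtimes F$ with cyclic or non-cyclic normal Sylow $P$ are exactly what drive Corollary~B, the theorem on groups of order $p^{\alpha}q^{\beta}$, and the Frobenius-group bound later in the paper, whereas your argument, being tailored to the existence of a non-cyclic Sylow subgroup, does not directly help there.
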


We recall that a group $G$ is said to admit a {\it Sylow tower} if there exists a normal series
\[
1 = G_0 \leq G_1 \leq \cdots \leq G_n=G
\]
such that each $G_{i+1}/G_i$ is isomorphic to a Sylow subgroup of $G$ for every $i \in \{0, \ldots, n-1\}$.

As a consequence of Theorem~\ref{mainthm} we obtain the following bound for groups admitting a Sylow tower.

\begin{corollary}
Let $G$ be a non-cyclic group of order $n$ admitting a Sylow tower. 
If $q$ is the smallest prime dividing $n$, then

\[
\r(G) \leq  q^{-q} \rho(C_n).
\]

\end{corollary}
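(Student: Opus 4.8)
The plan is to deduce the corollary from Theorem~\ref{mainthm} by reducing to the case covered there.The plan is to deduce the corollary from Theorem~\ref{mainthm}, giving up the sharp exponent of that theorem in exchange for the uniform constant $q^{-q}$. The first, purely arithmetic, step is to observe that the exponent in Theorem~\ref{mainthm} already dominates $q$. Indeed, since $G$ is non-cyclic its order is not prime and in fact $n\geq q^{2}$: either $G$ is a $q$-group, in which case non-cyclicity forces $\abs{G}\geq q^{2}$, or $n$ has a prime divisor $p>q$ and then $n\geq pq>q^{2}$. Consequently
\[
\frac{n}{q}(q-1)\;\geq\; q(q-1)\;\geq\;q,
\]
so that $q^{-\frac{n}{q}(q-1)}\leq q^{-q}$. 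Hence, whenever $G$ satisfies the hypotheses of Theorem~\ref{mainthm}, the asserted bound follows at once with room to spare; the real work is to reduce a general Sylow-tower group to that situation.

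The second step treats the nilpotent case, which I expect to be a clean consequence of Theorem~\ref{mainthm}. A finite nilpotent group is supersoluble and splits as the direct product of its Sylow subgroups, $G=P_{1}\times\cdots\times P_{k}$ with $\abs{P_{i}}=p_{i}^{a_{i}}$. Using the elementary identity $\rho(A\times B)=\rho(A)^{\abs{B}}\rho(B)^{\abs{A}}$, valid for coprime orders since $o((a,b))=o(a)o(b)$, one obtains the exact factorisation
\[
\frac{\rho(G)}{\rho(C_{n})}=\prod_{i=1}^{k}\left(\frac{\rho(P_{i})}{\rho(C_{p_{i}^{a_{i}}})}\right)^{n/p_{i}^{a_{i}}}.
\]
Since $G$ is non-cyclic, some factor $P_{j}$ is non-cyclic, and $\rho(P_{i})\leq\rho(C_{p_{i}^{a_{i}}})$ for every $i$ by \cite{GP}; keeping only the $j$-th factor and applying Theorem~\ref{mainthm} to the non-cyclic $p_{j}$-group $P_{j}$ (of order $p_{j}^{a_{j}}$ with $a_{j}\geq 2$) yields $\rho(P_{j})/\rho(C_{p_{j}^{a_{j}}})\leq p_{j}^{-p_{j}^{a_{j}-1}(p_{j}-1)}\leq p_{j}^{-p_{j}(p_{j}-1)}$. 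As $x\mapsto x^{x(x-1)}$ is increasing and $p_{j}\geq q$, this gives $\rho(G)/\rho(C_{n})\leq p_{j}^{-p_{j}(p_{j}-1)}\leq q^{-q(q-1)}\leq q^{-q}$, as required.

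The remaining, and genuinely harder, case is that of a non-nilpotent Sylow-tower group, which I expect to be the main obstacle. Here the coprime factorisation above is no longer available, and one cannot simply bound $\rho$ multiplicatively along the tower: the naive estimate $\rho(G)\leq\rho(N)^{[G:N]}\rho(G/N)^{\abs{N}}$ already fails for $G=C_{4}$, $N=C_{2}$. My plan is to argue by induction on $\abs{G}$, using that a Sylow-tower group possesses a normal Sylow subgroup $P$ and, by Schur--Zassenhaus, a complement $K$, so that $G=P\rtimes K$ with $K$ again a Sylow-tower group. The idea is to exploit non-nilpotency directly: a nontrivial coprime action forces many $p$-elements into short orbits and thereby depresses the element orders of $G$ well below those of $C_{n}$, so that the loss relative to $C_{n}$ should comfortably exceed the factor $q^{q}$ that is saturated only by the elementary abelian example $C_{q}\times C_{q}$. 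The delicate point, and the step I would expect to require the most care, is to turn this heuristic into a rigorous comparison of the multisets of element orders of $G$ and of $C_{n}$ across a non-split or non-coprime extension, where $\rho$ is not monotone; in particular the Z-group case, in which every Sylow subgroup is cyclic (precisely the configuration excluded from Theorem~\ref{mainthm}), will have to be settled by a separate direct computation from the metacyclic presentation of $G$.
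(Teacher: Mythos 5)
Your proposal has a genuine gap: the non-nilpotent case, which is the heart of the corollary, is never actually proved. Your first two steps (the arithmetic comparison of exponents and the nilpotent case via the coprime factorisation) are correct and essentially reproduce Proposition~\ref{pr: nilpotent}, but for a non-nilpotent Sylow-tower group you offer only a heuristic (``a nontrivial coprime action forces many $p$-elements into short orbits'') and explicitly defer the rigorous step. Note also that this case cannot be reduced to Theorem~\ref{mainthm} at all, since a Sylow-tower group need not be supersoluble (e.g.\ $A_4$), which is why the paper does not derive the corollary from Theorem~\ref{mainthm} but from two separate quantitative results that your proposal lacks. Writing $G=P\rtimes H$ with $P$ a normal Sylow $p$-subgroup (Schur--Zassenhaus), the paper argues: (i) if $P$ is cyclic and $C_H(P)<H$, the exact formula $\rho(G)=\rho(P)^{|C_H(P)|}\rho(H)^{|P|}$ (Lemma~\ref{lem:mercede}, from \cite{AK}) gives $\rho(G)\leq \rho(C_n)\,\rho(P)^{-|H\setminus C_H(P)|}\leq q^{-q}\rho(C_n)$ (Corollary~\ref{cor:mercede}); (ii) if $P$ is non-cyclic, every $x\in P$ has $o(x)\leq |P|/p$, and comparing $o(yx)$ with $o(x)$ coset by coset yields $\rho(G)\leq \left(|P|/p\right)^{|G|}\rho(H)^{|P|}\leq q^{-q}\rho(C_n)$ (Proposition~\ref{nonciclico}); the remaining case $G=P\times H$ goes by induction. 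Your heuristic is in the right spirit but is not a proof, and without these two lemmas the induction you set up cannot close.

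Two of your side concerns are also misdirected. The worry about ``non-split or non-coprime extensions'' does not arise: at the top of a Sylow tower the extension $1\to P\to G\to H\to 1$ is always split and coprime, so the only issue is quantifying the deficit caused by a nontrivial action, which is exactly what Lemma~\ref{lem:mercede} and Proposition~\ref{nonciclico} do. Likewise, the Z-group case (all Sylow subgroups cyclic) needs no separate computation from a metacyclic presentation: it is exactly case (i) above, since a Sylow-tower group with all Sylow subgroups cyclic and all actions trivial would be cyclic.
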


Going further, we show that such a bound holds for other class of groups. Indeed, if $G$ is a non-cyclic group of order $n$ and $q$ is the smallest prime dividing $n$, we show that 
\[
\rho(G) \leq q^{-q} \rho(C_n),
\]
when either $n=p^{\alpha}q^{\beta}$, where $p > q$ are primes (Theorem \ref{thm:pq}), or $G$ is a Frobenius group (Proposition \ref{frobenius}).

%%%%%%%%%%%%%%%%%%%% SECTION

\section{Preliminary results}

In this section we recall some results concerning the function $\r$, then we compute a bound for the product of element orders of a group with a normal Sylow $p$-subgroup, and finally we show the main result in the case of non-cyclic nilpotent groups.

The following results from \cite{tarnauceanu} will be useful in the next.

\begin{lem}\cite[Prop.~1.1] {tarnauceanu}\label{rhocoprime}
Let $n \geq 1$, and let $H_1,\ldots,H_n$ be finite groups with pairwise coprime orders. Then 
\[
 \r\left( H_1 \times \cdots \times H_n \right)=\prod_{i=1}^{n}\r(H_i)^{\prod_{j\neq i}|H_j|}.
\]
\end{lem}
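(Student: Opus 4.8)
The plan is to reduce the statement to the elementary fact that, under coprimality, the order of an element of a direct product is the product of the orders of its coordinates. Indeed, for $g=(h_1,\ldots,h_n)\in H_1\times\cdots\times H_n$ one always has $o(g)=\mathrm{lcm}(o(h_1),\ldots,o(h_n))$; since each $o(h_i)$ divides $|H_i|$ and the $|H_i|$ are pairwise coprime, the $o(h_i)$ are themselves pairwise coprime, so this least common multiple equals $\prod_{i=1}^{n}o(h_i)$. This is the only structural input needed, and everything else is bookkeeping.

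With this in hand, I would compute $\r$ directly. Writing $G=H_1\times\cdots\times H_n$ and using the factorization of $o(g)$,
\[
\r(G)=\prod_{g\in G}o(g)=\prod_{(h_1,\ldots,h_n)\in G}\ \prod_{i=1}^{n}o(h_i)=\prod_{i=1}^{n}\ \prod_{(h_1,\ldots,h_n)\in G}o(h_i).
\]
For a fixed index $i$, as $g$ ranges over $G$ its $i$-th coordinate $h_i$ runs over $H_i$, and each value of $h_i$ is attained exactly $\prod_{j\neq i}|H_j|$ times, once for every choice of the remaining coordinates. Hence the inner product equals $\prod_{h_i\in H_i}o(h_i)^{\prod_{j\neq i}|H_j|}=\r(H_i)^{\prod_{j\neq i}|H_j|}$, and multiplying over $i$ gives the claimed formula.

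Alternatively the result follows by induction on $n$: the case $n=1$ is trivial, and the inductive step applies the case $n=2$ (where $\r(H_1\times H_2)=\r(H_1)^{|H_2|}\r(H_2)^{|H_1|}$ by the same coordinatewise splitting) to $K\times H_n$ with $K=H_1\times\cdots\times H_{n-1}$, noting $\gcd(|K|,|H_n|)=1$ and $|K|=\prod_{j\neq n}|H_j|$. Substituting the inductive formula for $\r(K)$ then upgrades each exponent $\prod_{j\neq i,\,j\le n-1}|H_j|$ to $\prod_{j\neq i,\,j\le n}|H_j|$ by absorbing the extra factor $|H_n|$.

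I do not anticipate any genuine difficulty here: the coprimality hypothesis does all the work by decoupling the orders, and the remaining task is purely combinatorial. The only point deserving care is the multiplicity count (equivalently, the exponent bookkeeping in the induction), namely that fixing one coordinate leaves exactly $\prod_{j\neq i}|H_j|$ completions; getting this index set right is what produces the exponent in the statement.
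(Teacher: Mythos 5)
Your argument is correct: the coprimality reduction $o(h_1,\ldots,h_n)=\prod_i o(h_i)$ plus the multiplicity count $\prod_{j\neq i}|H_j|$ per coordinate gives exactly the stated formula. The paper itself does not prove this lemma but cites it from T\u{a}rn\u{a}uceanu's paper, and your proof is the standard argument underlying that reference, so there is nothing to add.
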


As a consequence, and by using \cite[Theorem~1.1] {tarnauceanu}, one can readily see that the following corollary holds.

\begin{cor}\cite[Ex.~1.1] {tarnauceanu}\label{summ} 
Let $C_n$ be a cyclic group of order $n$.
\begin{enumerate}[label=(\roman*)]
    \item If $n=p^{\a}$ for some prime $p$, then
    \[
    \r(C_n)= p^{\displaystyle \frac{\alpha p^{\alpha+1}-(\alpha+1)p^\alpha+1}{p-1}}.
    \]
    \item If $n=p_1^{\alpha_1}p_2^{\alpha_2}\cdots p_s^{\alpha_s}$, where $p_i$'s are distinct primes and $\a_i$'s are positive integers, then
    \[
    \r(C_n)=\prod_{i=1}^{s}p_i^{\left(\displaystyle \frac{\alpha_i p_i^{\alpha_i+1}-(\alpha_i+1)p_i^{\alpha_i}+1}{p_i-1}\right){\displaystyle \frac{n}{p_i^{\a_i}}}}.
    \]
\end{enumerate}
\end{cor}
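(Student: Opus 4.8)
The plan is to treat the two parts in order, deriving (i) from a direct count of elements by order and then bootstrapping to (ii) via the coprime decomposition. For part (i) I would start from the standard observation that in a cyclic group $C_n$ the number of elements of order $d$ equals $\varphi(d)$ for each divisor $d$ of $n$, where $\varphi$ is Euler's totient; this gives the clean formula $\r(C_n)=\prod_{d\mid n} d^{\varphi(d)}$. Specializing to $n=p^{\a}$, whose divisors are exactly $1,p,\dots,p^{\a}$ with $\varphi(p^k)=p^k-p^{k-1}$ for $k\geq 1$ and the identity contributing a trivial factor, collapses the whole product to a single prime power $\r(C_{p^{\a}})=p^{S}$ with exponent $S=\sum_{k=1}^{\a} k\,(p^k-p^{k-1})$.

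The technical heart is then the evaluation of $S$, which I expect to be the only real obstacle, albeit a routine one. I would factor $k(p^k-p^{k-1})=(p-1)\,k\,p^{k-1}$, so that $S=(p-1)\sum_{k=1}^{\a} k\,p^{k-1}$, and recognize $\sum_{k=1}^{\a} k\,p^{k-1}$ as the derivative at $x=p$ of the finite geometric sum $\sum_{k=0}^{\a} x^k=(x^{\a+1}-1)/(x-1)$; alternatively the same closed form follows from an Abel summation or a telescoping argument, which may be cleaner for a self-contained write-up. Differentiating and simplifying gives $\sum_{k=1}^{\a} k\,p^{k-1}=\big((\a+1)p^{\a}(p-1)-(p^{\a+1}-1)\big)/(p-1)^2$, after which multiplying by $(p-1)$ and expanding the numerator collapses it to $\a p^{\a+1}-(\a+1)p^{\a}+1$. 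This yields exactly $S=\big(\a p^{\a+1}-(\a+1)p^{\a}+1\big)/(p-1)$, proving (i).

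For part (ii) I would invoke the coprime decomposition $C_n\cong C_{p_1^{\a_1}}\times\cdots\times C_{p_s^{\a_s}}$, whose factors have pairwise coprime orders, and apply \Cref{rhocoprime} directly. Since $\prod_{j\neq i}\abs{C_{p_j^{\a_j}}}=n/p_i^{\a_i}$, the lemma rewrites $\r(C_n)$ as $\prod_{i=1}^{s}\r(C_{p_i^{\a_i}})^{\,n/p_i^{\a_i}}$, and substituting the formula from part (i) for each factor $\r(C_{p_i^{\a_i}})$ produces the stated product, with the exponents multiplying as required. No estimation is involved: the corollary is a pure computation, so beyond the geometric-series sum in part (i) there is no genuine difficulty, only careful bookkeeping of the exponents.
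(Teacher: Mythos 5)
Your proposal is correct and follows essentially the same route the paper indicates: part (ii) via the coprime product formula of Lemma~\ref{rhocoprime}, and part (i) via the prime-power computation that the paper delegates to the cited reference (T\u{a}rn\u{a}uceanu's Theorem~1.1/Example~1.1). The only difference is that you supply the details of that computation --- the count $\varphi(d)$ of elements of each order $d$ and the evaluation of $\sum_{k=1}^{\alpha}k(p^k-p^{k-1})$ --- which the paper omits as known.
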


The following remarks will be useful in the next.

\begin{rem}\label{rem:p}
Let $p$ be a prime and $\alpha \geq 1$. Then
\[
(p^{\alpha -1})^{p^{\alpha}} \leq \rho(C_{p^{\alpha}}) p^{-1}.
\]

If, additionally, $p$ is odd and $\alpha \geq 2$, we have
\[
(p^{\alpha -1})^{p^{\alpha}} \leq \rho(C_{p^{\alpha}})p^{-p}.
\]
\end{rem}
\begin{comment}
\begin{proof}
By Corollary~\ref{summ}(i), we only need to check that
\[
(p-1)(\alpha -1)p^{\alpha} \leq \alpha p^{\alpha+1} - (\alpha +1)p^{\alpha} +1 - p +1,
\]
which easily follows.

Now let $p$ be odd, and $\alpha > 1$. Again, by Corollary~\ref{summ}(i), we only need to check that
\[
(p - 1)(\alpha - 1)p^{\alpha} \leq \alpha p^{\alpha+1} - (\alpha +1)p^{\alpha} +1 - p^2+p.
\]
However, this is equivalent to
%\[
%p^{\alpha+1} -2p^{\alpha}- p^2+ p + 1 \geq 0,
%\]
%that is
\[
p^{\alpha}(p -2)  \geq  p^2 - p - 1,
\]
which is true as $p >2$ and $\alpha >1$.
\end{proof}
\end{comment}

\begin{rem}\label{rem: q<p}
Let $p \geq q$ be positive integers. Then
\[
p^{-(p-1)/p} \leq q^{-(q-1)/q}
\]
\end{rem}
\begin{comment}
\begin{proof}
We observe that the thesis is equivalent to show that 
\[
p(q-1)\log_p(q)\leq q(p-1).
\]
Since $q\leq p$ then $p(q-1)\leq q(p-1)$. Thus we have
\[
p(q-1)\log_p(q)\leq p(q-1)\leq q(p-1).
\]
This completes the proof.
\end{proof}
\end{comment}

%An immediate consequence of Remark~\ref{rem:p} is the following.
\begin{comment}
For a non-cyclic $p$-group we have the following result.
\begin{pr}\label{pr: nilpotent}
(Unire a Proposizione 8)Let $G$ be a non-cyclic group of order $q^{\alpha}$, for $q$ a prime. Then
\[
\r(G) \leq  \frac{1}{q^{\frac{|G|}{q} (q-1)}}\r(C_{|G|}).
\]
\end{pr}
\begin{proof}
As $G$ is non-cyclic, $o(x) \leq q^{\alpha -1}$ for every $x \in G$. Let $M$ be a maximal subgroup of $G$. Applying \cite{GP}*{Theorem 3} we have
\begin{align*}
    \r(G)\leq \r(M)\prod_{x\in G\setminus M}o(x)\leq \r(C_{q^{\alpha-1}})(q^{\alpha-1})^{q^{\alpha}-q^{\alpha-1}}.
\end{align*}
Since $\r(C_{q^{\alpha}})=\r(C_{q^{\alpha-1}})q^{\alpha(q^{\alpha}-q^{\alpha-1})}$, it follows that
\begin{align*}
    \r(G)\leq \r(C_{q^{\alpha}})\frac{1}{q^{\alpha}-q^{\alpha-1}}=\frac{1}{q^{\frac{|G|}{q} (q-1)}}\r(C_{|G|}).
\end{align*}
This completes the proof.
\end{proof}
\end{comment}

%In the next we analyze the product of element orders of groups with a normal cyclic Sylow subgroup.
The next lemma collects results on the product of element orders of groups with a normal cyclic Sylow subgroup, that follow from Lemma~2.4 and Lemma~2.6 in \cite{AK}.
\begin{lem}\label{lem:mercede}
Let $p$ be a prime and let $G$ be a finite group satisfying $G = P \rtimes F$, where $P$ is a cyclic Sylow $p$-subgroup and $(p, |F|) = 1$. Write $Z=C_F(P)$. Then 
\begin{itemize}
    \item [(i)] $\rho(G) = \rho(P)^{|Z|} \rho(F)^{|P|}.$
    \item [(ii)] $\rho(G) \mid \rho(P)^{|F|} \rho(F)^{|P|},$ with equality if and only if $C_F(P)=F$.
\end{itemize}
\end{lem}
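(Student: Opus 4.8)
The plan is to obtain both parts from a single exact evaluation of $\rho(G)$, deducing (ii) from (i). Since $G=P\rtimes F$ with $P\cap F=1$, every element factors uniquely as $g=xy$ with $x\in P$, $y\in F$, so
\[
\rho(G)=\prod_{y\in F}\ \prod_{x\in P} o(xy).
\]
I would first observe that, once (i) is proved, (ii) is immediate: as $Z=C_F(P)\leq F$ we have $|Z|\mid|F|$, and $\rho(P)$ is a power of $p$, so $\rho(P)^{|Z|}\mid\rho(P)^{|F|}$ and hence $\rho(G)=\rho(P)^{|Z|}\rho(F)^{|P|}$ divides $\rho(P)^{|F|}\rho(F)^{|P|}$. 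When $P\neq 1$ we have $\rho(P)>1$, so equality forces $|Z|=|F|$, i.e. $C_F(P)=F$; the case $P=1$ is trivial. Thus the whole content lies in part (i).

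For fixed $y\in F$ with $m=o(y)$, I would compute $o(xy)$ using normality of $P$. Expanding $(xy)^m$ and collecting the $y$-conjugates gives $(xy)^m=N_y(x)$, where $N_y(x):=\prod_{i=0}^{m-1}{}^{y^{i}}x\in P$; since $P$ is abelian, $N_y\colon P\to P$ is a homomorphism. Because $m$ is coprime to $p$, while $N_y(x)$ has $p$-power order and $xy$ projects onto $y$ in $G/P\cong F$, a short coprimality argument yields $o(xy)=m\,o(N_y(x))=o(y)\,o(N_y(x))$. Substituting,
\[
\prod_{x\in P} o(xy)=o(y)^{|P|}\prod_{x\in P} o\big(N_y(x)\big),
\]
so everything reduces to evaluating $\prod_{x\in P} o(N_y(x))$.

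Here cyclicity of $P$ is decisive. Writing $P\cong\mathbb{Z}/p^{a}$ additively, conjugation by $y$ is multiplication by a unit $c$ of order $d\mid m$ coprime to $p$, and $N_y(x)=\big(\sum_{i=0}^{m-1}c^{i}\big)x$. If $y\in Z$ then $c=1$, so $N_y(x)=mx$; as multiplication by $m$ is a bijection of $P$ this gives $\prod_{x\in P}o(N_y(x))=\rho(P)$. If $y\notin Z$ then $d>1$, and reducing mod $p$ shows $\bar c\neq 1$, so $c-1$ is a unit of $\mathbb{Z}/p^{a}$; from $(c-1)\sum_{i=0}^{d-1}c^{i}=c^{d}-1=0$ one gets $\sum_{i=0}^{d-1}c^{i}=0$, whence $\sum_{i=0}^{m-1}c^{i}=0$ and $N_y$ is the trivial map, giving $\prod_{x\in P}o(N_y(x))=1$. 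Assembling over all $y\in F$,
\[
\rho(G)=\rho(F)^{|P|}\prod_{y\in F}\ \prod_{x\in P}o\big(N_y(x)\big)=\rho(F)^{|P|}\,\rho(P)^{|Z|},
\]
since exactly the $|Z|$ summands with $y\in Z$ contribute $\rho(P)$ and the remaining $|F|-|Z|$ contribute $1$. The main obstacle is precisely the vanishing of $N_y$ for nontrivial $y$: this is where cyclicity enters, through the fact that a nontrivial prime-to-$p$ automorphism of a cyclic $p$-group is fixed-point-free (equivalently, $c-1$ is invertible mod $p^{a}$). For a general abelian $P$ this dichotomy breaks down, which both carries the real content of the argument and explains the cyclicity hypothesis.
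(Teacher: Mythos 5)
Your proof is correct, and it takes a genuinely different route from the paper in the sense that the paper offers no proof at all: Lemma~\ref{lem:mercede} is stated there as a direct consequence of Lemmas~2.4 and~2.6 of Azad--Khosravi \cite{AK}, so your argument is a self-contained replacement for that citation. The computation you give --- factoring $\rho(G)$ over the cosets $Py$, reducing $o(xy)$ to $o(y)\,o(N_y(x))$ via the norm map $N_y(x)=\prod_{i=0}^{m-1}{}^{y^{i}}x$, and then using cyclicity of $P$ to show that $N_y$ is a bijection when $y\in Z$ and identically trivial when $y\notin Z$ --- is exactly the coprime-action mechanism underlying the cited results, and it makes the lemma independent of \cite{AK} at the cost of a page of elementary argument; deriving (ii) from (i), including the degenerate case $P=1$, is the right economy. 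Two steps that you state tersely would each need one more line in a written-out version: first, the ``short coprimality argument'' should record that $m\mid o(xy)$ because $F$ embeds in $G/P$, so the image $yP$ of $xy$ has order exactly $m$, while $o(xy)\mid m\,p^{k}$ with $p^{k}=o(N_y(x))$, forcing $o(xy)=mp^{k}$; second, the claim that $\bar c\neq 1$ modulo $p$ when $y\notin Z$ rests on the fact that the kernel of $(\mathbb{Z}/p^{a})^{\times}\to(\mathbb{Z}/p)^{\times}$ is a $p$-group, whereas the order $d>1$ of $c$ divides $o(y)$ and is thus prime to $p$. (For $p=2$ that same observation shows the case $y\notin Z$ is vacuous, since $\Aut(C_{2^{a}})$ is a $2$-group; your formula is unaffected.) Finally, ``summands'' in your last sentence should read ``factors''.
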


When  $C_F(P) \neq F$ we have the following result.
\begin{cor}\label{cor:mercede}
Let $p$ be a prime and let $G$ be a finite group satisfying $G = P \rtimes F$, where $P$ is a cyclic Sylow $p$-subgroup and $(p, |F|) = 1$.  If $C_F(P) \neq F$, we have $$\rho(G) \leq q^{-q} \rho(C_{|G|}),$$ where $q$ is the smallest prime dividing $|G|$.
\end{cor}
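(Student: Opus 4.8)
The plan is to reduce everything to the exact factorisation of $\rho(G)$ given by \cref{lem:mercede}(i) and compare it term by term with the analogous factorisation of $\rho(C_{|G|})$. Write $|P|=p^{\alpha}$, $|F|=m$, and set $Z=C_F(P)$, $z=|Z|$. Since $C_F(P)\neq F$, the subgroup $Z$ is proper in $F$, so $z\mid m$ and $z<m$; in particular $F$ is nontrivial and $m\geq 2$. By \cref{lem:mercede}(i), and using that $P$ is cyclic,
\[
\rho(G)=\rho(P)^{z}\,\rho(F)^{p^{\alpha}}=\rho(C_{p^{\alpha}})^{z}\,\rho(F)^{p^{\alpha}}.
\]
Now $\rho(F)\leq\rho(C_{m})$ by the Garonzi--Patassini bound \cite{GP}, and since $\gcd(p^{\alpha},m)=1$ \cref{rhocoprime} gives $\rho(C_{|G|})=\rho(C_{p^{\alpha}})^{m}\rho(C_{m})^{p^{\alpha}}$. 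Combining these,
\[
\rho(G)\leq\rho(C_{p^{\alpha}})^{z}\,\rho(C_{m})^{p^{\alpha}}=\rho(C_{p^{\alpha}})^{\,z-m}\,\rho(C_{|G|}).
\]

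Thus the statement reduces to the purely arithmetic inequality $\rho(C_{p^{\alpha}})^{\,m-z}\geq q^{q}$. Here I would use that $\rho(C_{p^{\alpha}})\geq\rho(C_{p})=p^{p-1}$, which is immediate from \cref{summ}(i) since the exponent there is nondecreasing in $\alpha$. It therefore suffices to establish $p^{(p-1)(m-z)}\geq q^{q}$, and I would split into the two exhaustive cases $q<p$ and $q=p$, according to whether the smallest prime of $|G|$ is $p$ or a prime factor of $m$.

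If $q<p$, then $q\mid m$, and using only $m-z\geq 1$ it is enough to check $p^{p-1}\geq q^{q}$: this follows from $p\geq q+1$, which yields $p^{p-1}\geq p^{q}>q^{q}$. If instead $q=p$, the required inequality becomes $(p-1)(m-z)\geq p$, and the crucial observation is that $m-z\geq 2$. Indeed, $m-z=1$ would force $z=m-1$ to divide $m$, hence $m=2$ and $q=p=2$, contradicting $\gcd(p,m)=1$. Once $m-z\geq 2$ is secured, $(p-1)(m-z)\geq 2(p-1)\geq p$ closes the case.

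The main obstacle is precisely this final arithmetic step. The crude estimate $\rho(C_{p^{\alpha}})^{\,z-m}\leq 1$ coming from $z<m$ only recovers $\rho(G)\leq\rho(C_{|G|})$, and the whole content is in extracting the sharp extra factor $q^{q}$. The delicate point is the boundary case $q=p$ with $m-z$ as small as possible; ruling out $m-z=1$ through the divisibility $z\mid m$ together with $\gcd(p,m)=1$ is exactly what makes the constant $q^{-q}$ work, and this is the step I would verify most carefully.
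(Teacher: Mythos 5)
Your proposal is correct and follows essentially the same route as the paper: the exact factorisation $\rho(G)=\rho(P)^{|Z|}\rho(F)^{|P|}$ from Lemma~\ref{lem:mercede}(i), the Garonzi--Patassini bound applied to $F$, and the coprime factorisation of $\rho(C_{|G|})$, reducing everything to $q^{q}\leq\rho(P)^{|F|-|Z|}$. You are in fact more careful than the paper at that last step, which the paper dismisses as ``obviously true since $q\leq p$'': your case split, and in particular ruling out $|F|-|Z|=1$ when $q=p$ via the divisibility $|Z|\mid|F|$ and $\gcd(p,|F|)=1$, supplies exactly the justification the paper leaves implicit.
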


\begin{proof}
Let $Z=C_F(P)$ and assume that $Z \neq F$. Applying Lemma~\ref{lem:mercede} to $G$ and \cite{GP}*{Theorem 3} to $F$, we have
\begin{align*}
    \rho(G) &= \rho(P)^{|Z|} \rho(F)^{|P|}\leq \rho(P)^{|Z|} \rho(C_{|F|})^{|P|} = \frac{1}{\rho(P)^{|F\setminus Z|}} \rho(C_{|G|}).
\end{align*}
Therefore we only need to check that $$q^q \leq \rho(P)^{|F\setminus Z|},$$ which is obviously true since $q \leq p$.
\end{proof}

It will be also useful to have information about $\rho(G)$  when $G$ has a non-cyclic normal Sylow subgroup.

\begin{pr}\label{nonciclico}
Let $p$ be an odd prime and let $G$ be a finite group satisfying $G = P \rtimes F$, where $P$ is a non-cyclic Sylow $p$-subgroup and $(p, |F|) = 1$. Then
$$\rho(G) \leq \left(\frac{|P|}{p}\right)^{|G|} \rho(F)^{|P|}.$$
In particular, $\rho(G) \leq q^{-q} \rho(C_{|G|}),$ where $q$ is the smallest prime dividing $|G|$.
\end{pr}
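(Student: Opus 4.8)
The plan is to bound the order of each element of $G$ by splitting it into its $p$-part and $p'$-part, and then to multiply these bounds over the whole group. Fix $x \in G$ and write $x = x_p\, x_{p'}$ as the product of its commuting $p$-part and $p'$-part, so that $o(x) = o(x_p)\, o(x_{p'})$. Since $P$ is non-cyclic of order $p^{\alpha}$ with $\alpha \geq 2$, it contains no element of order $p^{\alpha}$, whence $o(x_p) \leq p^{\alpha - 1} = |P|/p$. Let $\pi \colon G \to G/P \cong F$ be the canonical projection. As $x_p$ is a $p$-element and $(p,|F|) = 1$, its image $\pi(x_p)$ is trivial, so $\pi(x) = \pi(x_{p'})$; moreover $\langle x_{p'}\rangle \cap P = 1$, so the restriction of $\pi$ to $\langle x_{p'}\rangle$ is injective, giving $o(x_{p'}) = o(\pi(x_{p'})) = o(\pi(x))$. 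Combining these facts yields $o(x) \leq \frac{|P|}{p}\, o(\pi(x))$ for every $x \in G$.

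Multiplying over all $x \in G$ and grouping the elements according to their image in $F$ — each fibre $\pi^{-1}(f)$ having exactly $|P|$ elements — I would obtain
\[
\rho(G) = \prod_{x \in G} o(x) \leq \left(\frac{|P|}{p}\right)^{|G|} \prod_{f \in F} o(f)^{|P|} = \left(\frac{|P|}{p}\right)^{|G|} \rho(F)^{|P|},
\]
which is the first assertion.

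For the ``in particular'' part, I would compare this estimate with $\rho(C_{|G|})$. Since $|G| = |P|\,|F|$ with $(|P|,|F|) = 1$, Lemma~\ref{rhocoprime} gives $\rho(C_{|G|}) = \rho(C_{|P|})^{|F|}\rho(C_{|F|})^{|P|}$, while \cite{GP}*{Theorem~3} gives $\rho(F) \leq \rho(C_{|F|})$ and hence $\rho(F)^{|P|} \leq \rho(C_{|F|})^{|P|}$. Substituting and cancelling the common factor $\rho(C_{|F|})^{|P|}$, it remains to check the purely $p$-local inequality
\[
\left(\frac{|P|}{p}\right)^{|G|} q^{q} \leq \rho(C_{|P|})^{|F|}.
\]
Writing $|P| = p^{\alpha}$ so that $(|P|/p)^{|G|} = \big((p^{\alpha-1})^{p^{\alpha}}\big)^{|F|}$, and raising the odd-prime estimate of Remark~\ref{rem:p} (valid since $p$ is odd and $\alpha \geq 2$) to the power $|F|$, this reduces to $q^{q} \leq p^{p|F|}$, which holds because $q \leq p$ forces $q^{q} \leq p^{p} \leq p^{p|F|}$.

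The main obstacle, and the step on which everything rests, is the clean identity $o(x_{p'}) = o(\pi(x))$: it is precisely what turns the aggregate contribution of the $p'$-parts into exactly $\rho(F)^{|P|}$, rather than into something merely comparable to it. This hinges on the coprimeness $(p,|F|)=1$, which makes $p$-parts die under $\pi$ and forces $p'$-elements to inject into $F$. Once this is established, the remaining inequalities are routine, with Remark~\ref{rem:p} supplying the only nontrivial arithmetic input.
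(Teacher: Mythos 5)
Your proof is correct and takes essentially the same approach as the paper: where you obtain the pointwise bound $o(x) \leq \frac{|P|}{p}\,o(\pi(x))$ via the commuting $p$-part/$p'$-part decomposition (implicitly using that $P$, being normal, contains every $p$-element), the paper gets the same bound by writing $G = \bigcup_{x \in F} Px$ and noting $(yx)^{o(x)} \in P$, after which both arguments aggregate over the fibres of $G \to F$ identically. The deduction of the ``in particular'' part --- Remark~\ref{rem:p} (odd case, $\alpha \geq 2$) raised to the power $|F|$, Theorem~3 of \cite{GP} applied to $F$, and the final comparison $q^q \leq p^{p|F|}$ --- also matches the paper's computation.
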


\begin{proof}
Since $P$ is not cyclic, $o(x) \leq \frac{|P|}{p}$ for every $x \in P$. Now, $G= \cup_{x \in F} Px$, hence if $x \in F$ and $y \in P$ we have $(xy)^{o(x)} \in P$ and $o(xy)$ divides $\frac{|P|}{p}o(x)$. It follows that
\[
\rho(G) = \prod_{x\in F}\prod_{y\in P} o(yx) \leq \prod_{x\in F}\prod_{y\in P} \frac{|P|}{p}o(x) = \prod_{y\in P} \left(\frac{|P|}{p}\right)^{|F|} \rho (F)\leq \left(\frac{|P|}{p}\right)^{|G|} \rho(F)^{|P|}.
\]

Finally, from Remark~\ref{rem:p} and  \cite{GP}*{Theorem 3} we have 
\[
\rho(G) \leq (\rho(C_{|P|})p^{-p})^{|F|} \rho(C_{|F|})^{|P|} = p^{-p |F|} \rho(C_{|G|}) \leq p^{-p} \rho(C_{|G|}) \leq q^{-q} \rho(C_{|G|}).
\]
\end{proof}

For a non-cyclic nilpotent group we have the following result.
\begin{pr}\label{pr: nilpotent}
Let $G$ be a non-cyclic nilpotent group of order $n$ and let $q$ be the smallest prime dividing $n$. Then 
\[
%\r(G) \leq  \frac{1}{q^{\frac{n}{q} (q-1)}}\r(C_{n}).
\r(G) \leq  q^{-\frac{n}{q} (q-1)}\r(C_{n}).
\]
\end{pr}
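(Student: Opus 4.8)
The plan is to exploit that a finite nilpotent group is the direct product of its Sylow subgroups. Write $n = p_1^{\alpha_1}\cdots p_s^{\alpha_s}$ with $q = p_1 < p_2 < \cdots < p_s$, and $G = P_1 \times \cdots \times P_s$, where $P_i$ is the Sylow $p_i$-subgroup of $G$. Since $G$ is non-cyclic, at least one factor $P_k$ is non-cyclic. As the $P_i$ have pairwise coprime orders, Lemma~\ref{rhocoprime} gives $\rho(G) = \prod_{i=1}^s \rho(P_i)^{n/p_i^{\alpha_i}}$ and, applied to $C_n \cong C_{p_1^{\alpha_1}} \times \cdots \times C_{p_s^{\alpha_s}}$, gives $\rho(C_n) = \prod_{i=1}^s \rho(C_{p_i^{\alpha_i}})^{n/p_i^{\alpha_i}}$. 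It therefore suffices to bound each Sylow factor against the corresponding cyclic factor.

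First I would dispose of the cyclic factors: for every $i$, \cite{GP}*{Theorem 3} yields $\rho(P_i) \le \rho(C_{p_i^{\alpha_i}})$. The crux is a sharp estimate for the non-cyclic factor $P_k$, of order $p_k^{\alpha_k}$. Here I would use that a non-cyclic $p$-group has exponent at most $p^{\alpha-1}$, so every element of $P_k$ has order at most $p_k^{\alpha_k - 1}$. Choosing a maximal subgroup $M \le P_k$ (so $|M| = p_k^{\alpha_k-1}$) and splitting the product over $M$ and its complement, \cite{GP}*{Theorem 3} applied to $M$ gives
\[
\rho(P_k) = \rho(M)\prod_{x \in P_k \setminus M} o(x) \le \rho(C_{p_k^{\alpha_k-1}})\,(p_k^{\alpha_k-1})^{p_k^{\alpha_k}-p_k^{\alpha_k-1}}.
\]
Comparing this with the identity $\rho(C_{p_k^{\alpha_k}}) = \rho(C_{p_k^{\alpha_k-1}})\,(p_k^{\alpha_k})^{p_k^{\alpha_k}-p_k^{\alpha_k-1}}$, which follows from Corollary~\ref{summ}(i), cancels the common factor $\rho(C_{p_k^{\alpha_k-1}})$ and leaves
\[
\rho(P_k) \le \left(\frac{p_k^{\alpha_k-1}}{p_k^{\alpha_k}}\right)^{p_k^{\alpha_k}-p_k^{\alpha_k-1}}\rho(C_{p_k^{\alpha_k}}) = p_k^{-\frac{|P_k|}{p_k}(p_k-1)}\rho(C_{p_k^{\alpha_k}}).
\]

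Substituting these estimates into the product for $\rho(G)$, all the cyclic factors match those of $\rho(C_n)$ while $P_k$ contributes the extra weight. Since $\frac{|P_k|}{p_k}(p_k-1)\cdot\frac{n}{p_k^{\alpha_k}} = \frac{n}{p_k}(p_k-1)$, I obtain $\rho(G) \le p_k^{-\frac{n}{p_k}(p_k-1)}\rho(C_n)$. Finally I would convert the prime $p_k$ into the smallest prime $q$: raising the inequality of Remark~\ref{rem: q<p} (valid since $p_k \ge q$) to the $n$-th power gives $p_k^{-\frac{n}{p_k}(p_k-1)} \le q^{-\frac{n}{q}(q-1)}$, which yields the claim (and uniformly covers the case where the non-cyclic Sylow factor is already the $q$-one).

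The main obstacle is the sharp $p$-group estimate: the naive bound $\rho(P_k) \le (p_k^{\alpha_k-1})^{p_k^{\alpha_k}}$ coming from the exponent alone loses too much, since by Remark~\ref{rem:p} it controls $\rho(C_{p_k^{\alpha_k}})$ only up to a factor $p_k^{-1}$, or $p_k^{-p_k}$ for odd $p_k$, which is a gap independent of $\alpha_k$ and hence insufficient to reach the exponent $\frac{n}{p_k}(p_k-1)$. Passing to a maximal subgroup and matching with the explicit cyclic value from Corollary~\ref{summ}(i) is what recovers the correct exponent $p_k^{-p_k^{\alpha_k-1}(p_k-1)}$; the remaining steps are then purely formal.
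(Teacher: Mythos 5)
Your proposal is correct and follows essentially the same route as the paper: the paper proceeds by induction on the number of prime divisors, with the base case being exactly your maximal-subgroup estimate $\rho(P)\leq \rho(C_{p^{\alpha-1}})(p^{\alpha-1})^{p^{\alpha}-p^{\alpha-1}}\leq p^{-\frac{|P|}{p}(p-1)}\rho(C_{|P|})$ for a non-cyclic $p$-group, and the inductive step being your splitting off of a non-cyclic Sylow factor via Lemma~\ref{rhocoprime}, bounding the complementary factor by \cite{GP}*{Theorem 3}, and converting $p$ to $q$ via Remark~\ref{rem: q<p}. Your one-shot decomposition into all Sylow subgroups is only a cosmetic repackaging of that induction.
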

\begin{proof}
We proceed by induction on the number of prime divisors of $n$. Assume first that there exists $\alpha\geq 1$ such that $n=q^{\alpha}$. As $G$ is non-cyclic, we have $o(x) \leq q^{\alpha -1}$ for every $x \in G$. Let $M$ be a maximal subgroup of $G$. Applying \cite{GP}*{Theorem 3} we have
\begin{align*}
    \r(G) = \r(M)\prod_{x\in G\setminus M}o(x)\leq \r(C_{q^{\alpha-1}})(q^{\alpha-1})^{q^{\alpha}-q^{\alpha-1}}.
\end{align*}
Since $\r(C_{q^{\alpha}})=\r(C_{q^{\alpha-1}})q^{\alpha(q^{\alpha}-q^{\alpha-1})}$, it follows that
\begin{align*}
   % \r(G)\leq \r(C_{q^{\alpha}})\frac{1}{q^(q^{\alpha}-q^{\alpha-1})}=\frac{1}{q^{\frac{n}{q} (q-1)}}\r(C_{n}),
   \r(G)\leq \r(C_{q^{\alpha}})q^{-(q^{\alpha}-q^{\alpha-1})}=q^{-\frac{n}{q} (q-1)}\r(C_{n}),
\end{align*}
and in this case the result follows.

Assume now that at least two primes divide $n$.
Since $G$ is not cyclic, there exists a non-cyclic Sylow $p$-subgroup $P$ such that $G$ can be written as $G=P\times H$ with $(|P|,|H|)=1$. By induction and applying \cite{GP}*{Theorem 3} to $H$, we have
\begin{align*}
   % \rho(G) &= \rho(P)^{|H|} \rho(H)^{|P|}\leq \left(\frac{1}{p^{\frac{|P|}{p}(p-1)}}\r(C_{|P|})\right)^{|H|}\r(H)^{|P|}\\
    %&\leq \frac{1}{p^{\frac{|G|}{p}(p-1)}}\r(C_{|P|})^{|H|}\r(C_{|H|})^{|P|}=\frac{1}{p^{\frac{n}{p}(p-1)}}\r(C_{|G|}),
    \rho(G) &= \rho(P)^{|H|} \rho(H)^{|P|}\leq \left(p^{-\frac{|P|}{p}(p-1)}\r(C_{|P|})\right)^{|H|}\r(H)^{|P|}\\
    &\leq p^{-\frac{|G|}{p}(p-1)}\r(C_{|P|})^{|H|}\r(C_{|H|})^{|P|}=p^{-\frac{n}{p}(p-1)}\r(C_{|G|}),
\end{align*}
and the result follows from Remark~\ref{rem: q<p} as $q \leq p$.
\end{proof}

%%%%%%%%%%%%%% SECTION

\section{Proof of Theorem~\ref{mainthm} and some applications}\label{main}

In this section we prove the main result of the paper. We recall that, for $p$ a prime, a finite group $G$ is said to be \textit{$p$-nilpotent} if $G$ has a normal $p$-complement, i.e. there exists a normal subgroup $H$ and a Sylow $p$-subgroup $P$ of $G$ such that $HP = G$ and $H \cap P$ is trivial. Moreover, we say that a group $G$ is metacyclic
if its derived subgroup $G'$ is cyclic and the factor group $G/G'$ is cyclic.

\begin{proof}[Proof of Theorem \ref{mainthm}]
By Proposition~\ref{pr: nilpotent}, we can assume that $G$ is a supersoluble non-metacyclic group of order $n$ and we proceed by induction on $n$. If $p$ is the greatest prime dividing $n$, there exist a subgroup $H \leq G$ with $(|H|,p)=1$ such that $G=P \rtimes H$, where $P$ is the Sylow $p$-subgroup of $G$. Clearly $q$ divides $|H|$.

Firstly assume that $P$ is cyclic. Then we show that $H$ is not metacyclic. Arguing by contradiction, let $H'$ and $H/H'$ be cyclic. Since $P$ is cyclic, it follows that $G'$ centralizes $P$ and $G' \leq P \times H'$ is cyclic. Moreover, from $G/G' = PG'/G' \times HG'/G'$ we have $G/G'$ is cyclic as $H/H'$ so is. Therefore $G$ is metacyclic, which is a contradiction.

Therefore, when $P$ is cyclic we can apply Lemma~\ref{lem:mercede} and the induction argument on $H$ to conclude that the result holds.

Now assume that $P$ is not cyclic. If $G= P \times H$, from Lemma~\ref{rhocoprime} it follows that $\rho(G)=\rho(P)^{|H|} \rho(H)^{|P|}$. Then by Proposition \ref{pr: nilpotent} and Remark \ref{rem: q<p} the bound is obtained. 

Therefore we can assume that $C_{H}(P) < H$. 
If $H$ is not metacyclic, again we can apply Lemma \ref{lem:mercede}  and the induction argument on $H$. 

Hence suppose that both $P$ is a non-cyclic group and $H$ is a metacyclic group. 
In this case we have
\[
\rho(G) \leq \rho(H)^{|P|}\left(\frac{|P|}{p}\right)^{|G|} \leq \rho(C_{|H|})^{|P|}\left(\frac{|P|}{p}\right)^{|G|}.
\]
Thus we need to check if 
\[
%\rho(C_{|H|})^{|P|}\left(\frac{|P|}{p}\right)^{|G|} \leq \frac{1}{q^{\frac{|G|}{q} (q-1)}}\r(C_{|H|})^{|P|}\r(C_{|P|})^{|H|},
\rho(C_{|H|})^{|P|}\left(\frac{|P|}{p}\right)^{|G|} \leq q^{-\frac{|G|}{q} (q-1)}\r(C_{|H|})^{|P|}\r(C_{|P|})^{|H|},
\]
which is equivalent to prove that
\[
%\left(\frac{|P|}{p}\right)^{|G|} \leq \frac{1}{q^{\frac{|G|}{q} (q-1)}}\r(C_{|P|})^{|H|}.
\left(\frac{|P|}{p}\right)^{|G|} \leq q^{-\frac{|G|}{q} (q-1)}\r(C_{|P|})^{|H|}.
\]
Expanding all the values above, we have
\[
q^{\frac{p^{\alpha}(q-1)}{q}} \leq p^{\frac{p^{\alpha+1}-2p^{\alpha}+1}{p-1}},
\]
which reduces to prove that $p^
{\alpha}(p-q-1)+q \geq 0$, that is true as $p>q\geq 2$. This completes the proof.
\end{proof}

\begin{proof}[Proof of Corollary B]
Since $G$ has a Sylow tower, there exists a prime $p$ and a Sylow $p$-subgroup of $G$ such that $G=P \rtimes H$, where $H \leq G$ such that $(p, |H|)=1$. If $G= P \times H$, then $\rho(G)=\rho(P)^{|H|} \rho(H)^{|G|}$ and the result follows by induction on $|G|$. Therefore assume that $C_H(P) < H$. If $P$ is cyclic, the case follows from Corollary~\ref{cor:mercede}. Assume therefore that $P$ is not cyclic, then by Proposition~\ref{nonciclico} the bound is obtained and we are done.

\end{proof}

We finish the section dealing with groups of order $p^{\alpha}q^{\beta}$, where $p, q$ are primes with $p>q$.

 \begin{thm}\label{thm:pq}
Let $G$ be a non-cyclic group of order $n=p^{\alpha}q^{\beta}$, where $p > q$ are primes. Then
\[
\rho(G) \leq q^{-q} \rho(C_n).
\]
\end{thm}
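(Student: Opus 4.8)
The plan is to reduce to a soluble group and then organise the argument around the normality of the Sylow subgroups, isolating the ``no normal Sylow'' configuration as a separate case. Since $|G|=p^{\alpha}q^{\beta}$, Burnside's $p^{a}q^{b}$-theorem shows that $G$ is soluble, and $q$ is the smallest prime dividing $n$. I would proceed by induction on $n$, the nilpotent instances being already settled by Proposition~\ref{pr: nilpotent} (there $q^{-\frac{n}{q}(q-1)}\le q^{-q}$, since $\frac{n}{q}(q-1)\ge p(q-1)\ge q$).

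Suppose first that $G$ has a normal Sylow subgroup, so that $G=R\rtimes K$ with $R$ a normal Sylow $r$-subgroup for some $r\in\{p,q\}$ and $K$ a complement. If $R$ is cyclic then either $C_{K}(R)=K$, forcing $G=R\times K$ to be nilpotent so that Proposition~\ref{pr: nilpotent} applies, or $C_{K}(R)\neq K$ and Corollary~\ref{cor:mercede} gives the bound immediately. If $R$ is non-cyclic and $r$ is odd, Proposition~\ref{nonciclico} yields $\rho(G)\le q^{-q}\rho(C_{n})$ at once. The one delicate point is $r=q=2$ with a non-cyclic Sylow $2$-subgroup $R$: although Proposition~\ref{nonciclico} is stated for odd primes, the inequality $\rho(G)\le(|R|/2)^{|G|}\rho(K)^{|R|}$ established in its proof uses only that a non-cyclic $2$-group has exponent at most $|R|/2$, and hence stays valid. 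Feeding in $\rho(K)\le\rho(C_{|K|})$, Lemma~\ref{rhocoprime}, and the first inequality of Remark~\ref{rem:p}, the desired bound collapses to $p^{\alpha}\ge 2$, which is clear since $p\ge 3$.

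The main obstacle is the remaining configuration, in which neither Sylow subgroup is normal; the symmetric group $S_{4}$ is the prototype. Observe that then at least one Sylow subgroup must be non-cyclic, for otherwise all Sylow subgroups of $G$ would be cyclic, $G$ would be supersoluble, and it would carry a normal Sylow subgroup for its largest prime. Using solubility I would pass to a proper normal subgroup $M\trianglelefteq G$ of prime index $r$ (such $M$ exists because $G/G'$ is a non-trivial abelian $\{p,q\}$-group; for $S_{4}$ one takes $M=A_{4}$), write
\[
\rho(G)=\rho(M)\prod_{x\in G\setminus M}o(x),
\]
and invoke the inductive hypothesis on $M$. The problem then reduces to comparing the product of the orders of the elements outside $M$ with the analogous quantity in $C_{n}$, namely to showing $\prod_{x\in G\setminus M}o(x)\le\prod_{y\in C_{n}\setminus C_{|M|}}o(y)$ with enough room to retain the factor $q^{-q}$. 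I expect this localized version of the Garonzi--Patassini inequality $\rho(G)\le\rho(C_{n})$ — which must simultaneously absorb the constant $q^{-q}$ and cope with a possibly cyclic $M$ — to be the genuinely hard step, whereas the normal-Sylow cases above are essentially bookkeeping over the results already proved.
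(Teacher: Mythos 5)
Your reduction to the normal-Sylow cases is sound: when $G=R\rtimes K$ with $R$ a normal Sylow subgroup, your subcases are covered correctly by Proposition~\ref{pr: nilpotent}, Corollary~\ref{cor:mercede} and Proposition~\ref{nonciclico}, and your extension of the latter to a non-cyclic Sylow $2$-subgroup is valid (the first inequality in its proof only uses that a non-cyclic $2$-group has exponent at most $|R|/2$, and the slack $2^{-p^{\alpha}}\le 2^{-2}$ does the rest). The genuine gap is the configuration in which neither Sylow subgroup is normal: you explicitly leave it as ``the genuinely hard step'' with only a speculative plan. That plan --- take $M\trianglelefteq G$ of prime index, write $\rho(G)=\rho(M)\prod_{x\in G\setminus M}o(x)$, induct on $M$, and prove a localized Garonzi--Patassini inequality for the coset product --- is not backed by anything in the paper or in your argument: the inductive hypothesis is unavailable if $M$ is cyclic, and, more seriously, every tool at hand for bounding products of element orders (Lemma~\ref{lem:mercede}, Corollary~\ref{cor:mercede}, Proposition~\ref{nonciclico}) requires exactly the normal-Sylow or cyclic structure that is absent here, so there is no handle on $\prod_{x\in G\setminus M}o(x)$ that also retains the factor $q^{-q}$. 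As it stands, the theorem is not proved.

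The paper closes this case by a different dichotomy, which you could adopt. Assume toward a contradiction that $\rho(G)>q^{-q}\rho(C_n)$. If every $x\in G$ satisfies $o(x)\le p^{\alpha-1}q^{\beta-1}$, then $\rho(G)\le (p^{\alpha-1}q^{\beta-1})^{n}$, and Remark~\ref{rem:p} together with Corollary~\ref{summ} already yields $\rho(G)\le q^{-q}\rho(C_n)$ --- note that this is exactly how your prototype $S_4$ is disposed of, with no structural analysis at all, since its elements have order at most $4=p^{\alpha-1}q^{\beta-1}$. Otherwise some $x$ has $o(x)>p^{\alpha-1}q^{\beta-1}$, hence $|G:\langle x\rangle|<pq$. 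If $p$ divides this index, it equals $p$, the Sylow $q$-subgroups are cyclic, and $G$ is $q$-nilpotent since $q$ is the smallest prime (Robinson 10.1.9). If not, the index is a $q$-power less than $pq$, so $\langle x\rangle$ contains a Sylow $p$-subgroup $P$ and $\langle x\rangle\le N_G(P)$; Sylow counting then forces either $P\trianglelefteq G$ (apply Corollary~\ref{cor:mercede}) or $N_G(P)=\langle x\rangle$, in which case $P\le Z(N_G(P))$ and $G$ is $p$-nilpotent by Burnside's transfer theorem (Robinson 10.1.8). In every branch $G$ acquires a Sylow tower and Corollary~B finishes the proof. Thus the no-normal-Sylow configuration never needs to be confronted directly: either all element orders are small, or a large cyclic subgroup manufactures a Sylow tower via transfer.
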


\begin{proof}
Assume by way of contradiction that $\rho(G) > q^{-q} \rho(C_n)$. Firstly we show that there exists $x \in G$ such that $o(x) > p^{\alpha -1}q^{\beta -1}$. Assume that $o(x) \leq p^{\alpha -1}q^{\beta -1}$ for every $x \in G$. Then we have
\[
\rho(G) \leq (p^{\alpha -1}q^{\beta -1})^{p^{\alpha} q^{\beta}}=((p^{\alpha -1})^{p^{\alpha}})^{q^{\beta}} ((q^{\beta -1})^{q^{\beta}})^{p^{\alpha}}.
\]
From Remark~\ref{rem:p} it follows that
\[
\rho(G) \leq (\rho(C_{p^{\alpha}})p^{-1})^{q^{\beta}} (\rho(C_{q^{\beta}})q^{-1})^{p^{\alpha}} \leq \rho(C_n) p^{-q^{\beta}}q^{-p^{\alpha}} \leq \rho(C_n) q^{-q},
\]
which yields a contradiction. 
Thus there exists $x \in G$ such that $o(x) > p^{\alpha-1}q^{\beta-1}$ with $|G: \langle x \rangle| < pq$. We distinguish two cases. If $p$ divides $|G: \langle x \rangle|$, then the only possibility is that $|G: \langle x \rangle|=p$. As a consequence, $G$ has cyclic Sylow $q$-subgroups, and so $G$ is $q$-nilpotent by \cite[10.1.9]{robinson}. Therefore $G$ admits a Sylow tower, and the result follows from Corollary~B.
Suppose then that $p$ does not divide  $|G: \langle x \rangle|$. In this case $|G: \langle x \rangle|=q^{\gamma}$, where $q^{\gamma-1}<p$ as $q^{\gamma}<pq$.
It follows that there exists a Sylow $p$-subgroup $P$ of $G$ such that $P \leq \langle x \rangle$. Then $\langle x \rangle \leq N_G(P)$ and  $|G : N_G(P)|$ divides $q^{\gamma}$. However, $|G : N_G(P)| = 1 + kp$, for $k \geq 0$. If $k = 0$, then $P$ is normal in $G$ and the result follows from Corollary \ref{cor:mercede}. If $k > 0$, since $|G : N_G(P)| > p$ and $q^{\gamma-1}<p$, we have $|G : N_G(P)| =q^{\gamma}$ and $N_G(P) = \langle x \rangle$. Therefore,  $P \leq Z(N_G(P))$ and $G$ is $p$-nilpotent by \cite[10.1.8]{robinson}
In this case $G = Q \rtimes P$. Therefore $G$ has a Sylow tower and applying again Corollary~B we are done. This completes the proof.
\end{proof}

\section{Product of element orders of a Frobenius group}

In the following, we estimate the product of element orders of a Frobenius group. We recall that a finite group $G$ is said to be a  \textit{Frobenius group} if $G$ has a subgroup $H$ such that $H \cap H^x=1$ for all $x \in G \setminus H$.  Frobenius proved that if $G$ is such a group, then
\[
N= G \setminus \bigcup_{x\in G} (H \setminus \{1\})^x
\]
is a normal subgroup of $G$, and $G=NH$ with $N \cap H=1$. In this case $H$ is called a \textit{Frobenius complement} and $N$ the \textit{Frobenius kernel}. 
As a consequence, in a Frobenius group $G=NH$, we have $(|N|,|H|)=1$.

\begin{pr}\label{frobenius}
Let $G$ be a Frobenius group with Frobenius kernel $N$ and Frobenius complement $H$. Then
\[
 \rho(G)=\rho(N)\rho(H)^{|N|}.
\]
In particular, if $G$ has order $n$ and $q$ is the smallest prime dividing $n$, we have
\[
\rho(G) \leq q^{-q}\rho(C_n).
\]
\end{pr}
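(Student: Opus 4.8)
The plan is to first establish the exact product formula $\rho(G)=\rho(N)\rho(H)^{|N|}$ directly from the Frobenius structure, and then to deduce the numerical bound by comparison with the cyclic group of the same order using \cite{GP}*{Theorem 3} and Lemma~\ref{rhocoprime}.

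For the formula I would use the description of $N$ recalled just above the statement, namely $G \setminus N = \bigcup_{x \in G}(H \setminus \{1\})^x = \bigcup_{x \in G}(H^x \setminus \{1\})$, together with the defining Frobenius condition $H \cap H^x = 1$ for $x \notin H$. The latter forces distinct conjugates of $H$ to meet only in the identity, so the sets $H^x \setminus \{1\}$ are pairwise disjoint. Since $|G \setminus N| = |N||H| - |N| = |N|(|H|-1)$ and each conjugate contributes $|H|-1$ non-identity elements, there are exactly $|N|$ distinct conjugates. Splitting the product defining $\rho(G)$ into the contribution of $N$ and that of $G \setminus N$, and using that conjugation preserves element orders so that every conjugate contributes $\prod_{h \in H \setminus \{1\}} o(h) = \rho(H)$, I obtain $\rho(G)=\rho(N)\rho(H)^{|N|}$.

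For the bound, coprimality $(|N|,|H|)=1$ and Lemma~\ref{rhocoprime} give $\rho(C_n)=\rho(C_{|N|})^{|H|}\rho(C_{|H|})^{|N|}$, while \cite{GP}*{Theorem 3} applied to $N$ and to $H$ yields $\rho(N)\leq \rho(C_{|N|})$ and $\rho(H)\leq \rho(C_{|H|})$. Combining these with the formula,
\[
\rho(G)=\rho(N)\rho(H)^{|N|}\leq \rho(C_{|N|})\rho(C_{|H|})^{|N|}=\frac{\rho(C_n)}{\rho(C_{|N|})^{|H|-1}},
\]
so it suffices to prove $\rho(C_{|N|})^{|H|-1}\geq q^q$.

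This last inequality is the main obstacle, since the crude estimate $\rho(C_{|N|})\geq |N|\geq q$ together with $|H|\geq q$ only produces the exponent $q-1$, falling one power short of $q^q$. To close the gap I would exploit coprimality and split according to which of $|N|,|H|$ is divisible by $q$. If $q\mid |N|$, then the $q-1$ elements of order $q$ give $\rho(C_{|N|})\geq q^{q-1}$, while $q\nmid |H|$ forces the least prime of $|H|$ to exceed $q$, so $|H|\geq q+1$ and $|H|-1\geq q$; hence $\rho(C_{|N|})^{|H|-1}\geq q^{(q-1)(|H|-1)}\geq q^{(q-1)q}\geq q^q$ for $q\geq 2$. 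If instead $q\mid |H|$, then $q\nmid |N|$, so the least prime $r$ of $|N|$ satisfies $r\geq q+1$ and $\rho(C_{|N|})\geq r^{r-1}\geq (q+1)^q\geq q^q$, which already suffices because $|H|-1\geq 1$. In both cases the required inequality holds, which would complete the proof.
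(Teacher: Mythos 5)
Your proof is correct and follows essentially the same route as the paper: the same covering of $G$ by the kernel $N$ and the $|N|$ pairwise "disjoint" conjugates of $H$ to obtain the exact formula, the same reduction via Lemma~\ref{rhocoprime} and Theorem~3 of \cite{GP} to the inequality $q^q \leq \rho(C_{|N|})^{|H|-1}$, and the same case split according to whether $q$ divides $|N|$ or $|H|$. Your only (harmless) deviation is in the closing estimates, where counting the $q-1$ elements of order $q$ (resp.\ $r-1$ elements of order $r$) in $C_{|N|}$ gives $\rho(C_{|N|}) \geq q^{q-1}$ (resp.\ $\rho(C_{|N|}) \geq r^{r-1} \geq (q+1)^q \geq q^q$), which is slightly sharper than the paper's bounds and lets you dispense with its sub-case distinction $|H|>q$ versus $|H|=q$.
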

\begin{proof}
As $G$ is a Frobenius group, it can be covered by its Frobenius kernel and all its Frobenius complements. Therefore
\[
\rho(G)=\rho(N)\rho(H)^{|N|}
\]
as $H$ has $|G:H|$ distinct conjugates in $G$. It follows that
$$
\rho(G)= \rho(N)\rho(H)^{|N|} \leq \rho\left(C_{|N|}\right)\rho\left(C_{|H|}\right)^{|N|}=\frac{\rho(C_n)}{\rho\left(C_{|N|}\right)^{|H|-1}}.
$$
Hence it suffices to show that 
$$
q^q \leq \rho\left(C_{|N|}\right)^{|H|-1}.
$$
Since $(|N|,|H|)=1$, we distinguish cases: $q$ divides $|N|$, or $q$ divides $|H|$. Assume the former holds. Then we have
$$
q \leq |N| \leq \rho(N) \leq \rho(C_{|N|}).
$$
Since $q$ is the smallest prime dividing $|G|$, we have $q<|H|$. In other words, $q \leq |H|-1$, and the result follows.

Assume now that $q$ divides $|H|$. Then 
\begin{equation}\label{case2}
q < \rho(N) \leq \rho(C_{|N|}).  
\end{equation}
If $|H|>q$, then $|H|-1 \geq q$ and the inequality follows from Equation \eqref{case2}. If $|H|=q$, then every prime $p$ dividing $|N|$ is greater than $q$, and so $\rho(N)\geq p^2 >q^2$. Since $|H|-1=q-1$, we have
$$
\rho(C_{|N|})^{|H|-1}\geq \rho(N)^{q-1} \geq q^{2(q-1)} \geq q^q.
$$
This completes the proof.
\end{proof}

We conclude this section with an example, in which we compute the product of element orders of a group obtained as direct product of a Frobenius group and a cyclic group of coprime order.

\begin{example}
Let $n>5$, and let $G=F \times C$ be the direct product of a Frobenius group $F$, with the cyclic kernel $N$ and a cyclic complement $H$, and a cyclic group $C$ such that $(|F|, |C|)=1$ and $|F||C|=n$. Then
\[
\rho(G)=\frac{\rho(C_n)}{\rho(C_{|N|})^{|C|(|H|-1)}}.
\]
Indeed, by Lemma~\ref{rhocoprime}, and Proposition~\ref{frobenius}, we have
\begin{equation*}
\rho(G) =\rho(F)^{|C|}\rho(C)^{|F|}=\rho(N)^{|C|}\rho(H)^{|N||C|}\rho(C)^{|F|}.   
\end{equation*}
Since $(|F|, |C|)=1$, applying Lemma~\ref{rhocoprime} it follows that 
\begin{align*}
\rho(G) &=\rho(N)^{|C|} \rho(H)^{|N||C|}\rho(C)^{|F|} \\[2mm]
        &=\frac{\rho(N)^{|C||H|} \rho(H)^{|N||C|}\rho(C)^{|F|}}{\rho(N)^{|C|(|H|-1)}}=\frac{\rho(C_{|F|})^{|C|} \rho(C)^{|F|}}{\rho(N)^{|C|(|H|-1)}} \\[2mm]
        &=\frac{\rho(C_n)}{\rho(N)^{|C|(|H|-1)}},
\end{align*}
and we are done.
\end{example}

\subsection*{Acknowledgments}
The authors are grateful to professors G. A. Fern\'andez-Alcober, P. Longobardi, and M. Maj for interesting conversations. They also thank the anonymous referees for giving us valuable comments.

\subsection*{Data Availability Statement}
\noindent This manuscript has no associated data.

\medskip

\end{document}